\declaretheorem[name=Theorem,numberwithin=section]{thm}
\newtheorem{lemma}[thm]{Lemma}
\newtheorem{corollary}[thm]{Corollary}
\newtheorem{proposition}[thm]{Proposition}
\theoremstyle{definition}
\newtheorem{remark}[thm]{Remark}
\newtheorem{definition}[thm]{Definition}
\newtheorem{example}[thm]{Example}
\numberwithin{equation}{section}
\DeclareMathOperator{\Tr}{Tr}
\DeclareMathOperator{\Gal}{Gal}
\newcommand{\Fp}{\mathbb{F}_p}
\newcommand{\Qp}{\mathbb{Q}_p}
\newcommand{\Zp}{\mathbb{Z}_p}
\DeclareMathOperator{\Hom}{Hom}
\DeclareMathOperator{\Res}{res}
\DeclareMathOperator{\Fr}{Fr}
\begin{document}

\title{Schmid's Formula for Higher Local Fields}

\author{Matthew Schmidt}
\address{Matthew Schmidt\\
Department of Mathematics\\
State University of New York\\
Buffalo, NY 14260, USA}
\email{mwschmid@buffalo.edu}

\subjclass[2000]{11R37, 11S15}

\keywords{Artin-Schreier-Witt, Schmid-Witt, higher local field, ramification groups}

\maketitle

%

\begin{abstract}
 In local class field theory, the Schmid-Witt symbol encodes interesting data about the ramification theory of $p$-extensi-ons of $K$ and can, for example, be used to compute the higher ramification groups of such extensions. In 1936, Schmid discovered an explicit formula for the Schmid-Witt symbol of Artin-Schreier extensions of local fields. Later, his formula was generalized to Artin-Schreier-Witt extensions, but still over a local field. In this paper we generalize Schmid's formula to compute the Artin-Schreier-Witt-Parshin symbol for Artin-Schreier-Witt extensions of two-dimensional local fields of positive characteristic. 
\end{abstract}

\bigskip
 
\section{Introduction}

Fix a prime $p>0$. Let $k$ be a finite field of characteristic $p$ and $K=k((T))$. Denote by $W(K)$ the ring of Witt vectors of $K$ and by $W_n(K)$ the ring of truncated Witt vectors of $K$ of length $n$. Let $\wp: W(K)\to W(K)$ denote the map defined by $\wp(x_0,x_1,\cdots)= (x_0^p,x_1^p,\cdots)-(x_0,x_1,\cdots)$. Using the reciprocity map $\omega_K: K^\times\to G_K=\Gal(K^{ab}/K)$ from local class field theory, we can define the Schmid-Witt symbol:

\begin{align}\label{Schmid_Witt}
[\cdot,\cdot)_n: W_n(K)/\wp W_n(K)\times K^\times/(K^\times)^{p^n} &\to W_n(\Fp),\\
	[x+\wp W_n(K),y\cdot(K^\times)^{p^n})_n &= \alpha^{\omega_K(y)}-\alpha,	\nonumber
\end{align}

where $\alpha\in W_n(K^{sep})$ is such that $\wp(\alpha)=x$. When $n=1$, this symbol was first studied by Schmid \cite{Schmid} who discovered the explicit formula
\begin{align*}
[x,y)_1=\Tr_{k/\Fp}(\Res(x\cdot d\log y)),
\end{align*}
where $\Res:K\to k$ is the residue map and $d\log$ is logarithmic differentiation. For $n>1$, Witt \cite{Witt} generalized Schmid's result to Artin-Schreier-Witt extensions over $K$:
\begin{align*}
[x,y)_n=\pi_n(\Tr_{W(k)/W(\Fp)}(\Res(g^{(n-1)}(X)\cdot d\log Y))),
\end{align*}
with $X$ and $Y$ being certain liftings of $x$ and $y$, $\pi_n: W(\Fp)\to W_n(\Fp)$ the natural projection map and $g^{(n)}(x)$ the $n$th ghost vector component of the Witt vector $X$. See Proposition~3.5 in \cite{Thomas} for a full, modern treatment.

Recently, Kosters and Wan simplified Witt's generalization of Schmid's formula producing a version much closer to Schmid's original and vitally avoiding the computation of ghost vectors:
\begin{align}\label{line:KW}
[x,y)_n=\pi_n(\Tr_{W(k)/W(\Fp)}(\Res(\tilde{x}\cdot d\log \tilde{y}))),
\end{align}
where $\tilde{x}$ and $\tilde{y}$ are again certain liftings of $x$ and $y$ (see Theorem 1.2 in \cite{KostersWanPub}). In this paper, we generalize (\ref{line:KW}) to two-dimensional local fields of positive characteristic.

More precisely, put $K=k((S))((T))$ and let $K_2^{top}(K)$ be the topological Milnor $K$-group of $K$ (\cite{Fesenko}, Definition 4.1). There are several approaches to generalize local class field theory, and thus the Schmid-Witt symbol, to higher local fields. Kato, in a series of papers (\cite{Kato1}, \cite{Kato2} and \cite{Kato3}), employs cohomological machinery, much like Tate did in the classical case, to build the reciprocity map. The high dimensional analogue to the Schmid-Witt symbol can then be defined as we did in the classical case (\ref{Schmid_Witt}). On the other hand, Parshin \cite{Parshin} uses an explicit non-degenerate pairing called the Artin-Schreier-Witt-Parshin pairing (Definition~\ref{parshins_formula}) :
\begin{align}\label{aswp_reciprocity}
	[\cdot , \cdot)_n: W_n(K)/\wp W_n(K)\times K_2^{top}(K)/(K_2^{top}(K))^{p^n} &\to W_n(\Fp),
\end{align}
and after applying Artin-Schreier-Witt theory, he is able to use this pairing to derive a map from $K_2^{top}(K)$ to the pro-$p$ part of the absolute Galois group of $K$. Pasting together similarly derived maps for the tame and unramified parts, Parshin obtains the full reciprocity map. In this paper, we adopt Parshin's explicit approach. 

After passing to projective limits, the symbol (\ref{aswp_reciprocity}) becomes:
\begin{align}\label{line:ASWPsymbol}
[\cdot , \cdot): W(K)/\wp W(K)\times \widehat{K_2^{top}(K)} \to W(\Fp),
\end{align}
where $\widehat{K_2^{top}(K)}$ is the $p$-adic completion of the topological Milnor $K$-group. The main theorem of this paper is the generalization of Kosters and Wan's formula (\ref{line:KW}) to the symbol (\ref{line:ASWPsymbol}). To construct this generalization, we apply lifting/residue maps that allow us to express the Artin-Schreier-Witt-Parshin symbol as the trace of a residue. Take $R$ to be a certain two sided power series ring in $S$ and $T$ and let $\tilde{K}=\Fr(W(k))((S))((T))$. Define a reduction $\ \bar{}:W(K)/\wp W(K)\to R$ and lift $\ \hat{}: \widehat{K_2^{top}(K)}\to \widehat{K_2^{top}(\tilde{K})}$. For full details see section~\ref{explicit_local_symbol}. Our primary result is then:
\begin{restatable}{theorem}{mainformula}\label{main_formula}
For $x\in W(K)/\wp W(K)$ and $y\in \widehat{K_2^{top}({K})}$,
	\[
		[x,y)=\Tr_{W(k)/W(\Fp)}(\Res(\bar{x}\cdot d\log(\hat{y}))).
	\]
\end{restatable}

After establishing some structure theorems, Proposition~\ref{basis} and  Corollary~\ref{K2top_structure},  we prove Theorem~\ref{main_formula} by showing our formula coincides with Parshin's on certain fundamental terms and extend via bilinearity and continuity.

 Again, the core advantage of Theorem~\ref{main_formula} is that we avoid ghost vectors entirely which greatly simplifies computations. As an application, we apply these simplifications to compute the upper ramification groups of $G_{p^\infty}$, the maximal abelian pro-$p$ extension of $K$, as defined by Lomadze and Hyodo in \cite{Lomadze} and \cite{Hyodo}.

Lexicographically order $\mathbb{Z}^2$. For two vectors $\vec{r}=(r_1, r_2), \vec{m}=(m_1,m_2)\in\mathbb{Z}_{\geq 0}^2$, let $\ell(\vec{r}, \vec{m})$ be the minimum integer $\ell$ such that $(p^\ell m_1, p^\ell m_2)\nless \vec{r}$.
For any $\vec{m}=(m_1,m_2)\in\mathbb{Z}^2$, write $\gcd\vec{m}=\gcd (m_1,m_2)$  and let $\mathbb{Z}^2_{>0}$ be the set of vectors in $\mathbb{Z}^2$  greater than $(0,0)$. For $\vec{i}\in\mathbb{Z}_{>0}^2$, let $G_{p^\infty}^{\vec{i}}$ denote the upper ramification group of $G_{p^\infty}$ (Definition~\ref{def:ramgrp}). Then: 

\begin{restatable}{theorem}{ramgroups}\label{ram_groups}
For $\vec{i}\in\mathbb{Z}_{>0}^2$, there is a group isomorphism:
	\[
		\phi_{\vec{i}}: G_{p^\infty}^{\vec{i}}\cong \left (p^{\ell(\vec{i}, \vec{m})}W(k)\right)_{\substack{\vec{m}\in\mathbb{Z}_{>0}^2\\ p\nmid \gcd \vec{m}}}\subseteq \prod_{\substack{\vec{m}\in\mathbb{Z}_{>0}^2\\ p\nmid \gcd \vec{m}}} W(k),
	\]
such that $\phi_{\vec{j}}=\phi_{\vec{i}}\vert_{G_{p^\infty}^{\vec{j}}}$ whenever $\vec{j}\leq \vec{i}$.
\end{restatable}

Theorem~\ref{ram_groups} generalizes Theorem 1.1 in \cite{Thomas}, which describes the ramification groups of pro-$p$ abelian extensions for a one-dimensional local field $K$. Thus, our work can be interpreted as a first step generalization of Thomas' work in \cite{Thomas} to the case where the residue field is not only not finite but not even perfect.  

This paper was written under the supervision of my advisor, Hui June Zhu. I thank her for her constant advice and guidance.  I am also grateful to Michiel Kosters and Daqing Wan for their comments and suggestions. 

\section{Preliminaries}

\subsection{Witt Vectors}

Let $R$ be a commutative ring with unity. Denote by $W(R)$  the ring of Witt vectors over $R$ and  $W_m(R)$ the ring of truncated Witt vectors. Let the $n$th ghost vector component of a Witt vector $x=(x_i)_{i\geq 0}\in W(R)$ be
	\[
		g^{(n)}(x)=\sum_{i=0}^n p^ix_i^{p^{n-i}},
	\]
and when $p$ is invertible in $R$, let the bijection $g:=(g^{(0)}, g^{(1)}, \cdots): W(R)\to R^{\mathbb{Z}_{\geq 0}}$ be the ghost vector map. 

For an $a\in R$, define the Teichm\"uller lift of $a$ in $W(R)$ to be $[a]=(a, 0, \cdots)$, and for a Witt vector $(x_0, x_1,\cdots)\in W(R)$, define the shifting map $V$ by mapping $(x_0, x_1, \cdots)$ to $(0, x_0, x_1, \cdots)$.

For a detailed exposition of Witt vectors see \S II.6 in \cite{Serre}. 

\subsection{Higher Local Fields}\label{subsection:HLF}

For any complete discrete valuation field $K$, denote by $\bar{K}$ the residue field of $K$. A complete discrete valuation field $K$ is said to be an $n$-dimensional local field if there is a sequence of complete discrete valuation fields $K_0, \cdots, K_n=K$ such that:
\begin{enumerate}
	\item $K_0$ is a finite field.
	\item $\bar{K_i}=K_{i-1}$ for $1\leq i\leq n$.
\end{enumerate}
The field $K=k((S))((T))$ of iterated Laurent series over the finite field $k$ is a two dimensional local field with tower of complete discrete valuation fields $k((S))((T))\supseteq k((S))\supseteq k$. If $F$ is any unramified extension of $\Qp$ with valuation $v_F$, let $F\{\{S\}\}$ be the set of doubly infinite power series:
\begin{align*}
	F\{\{S\}\} = \left \{ \sum_{i=-\infty}^{\infty}a_iS^i : a_i\in F, \inf_i v_F(a_i)>-\infty, a_i\to 0\textrm{ as } i\to -\infty\right \}.
\end{align*}
One can show that $F\{\{S\}\}$ is itself a complete discrete valuation field and can iteratively define $F\{\{S_1\}\}\cdots \{\{S_n\}\}$ (see Section~2 of \cite{Morrow} for details).
For example, if $F=\Qp$, then the tower of complete discrete valuation fields of $\Qp\{\{S\}\}\{\{T\}\}$ is given by:
\[
	\Qp\{\{S\}\}\{\{T\}\}\supseteq \Fp((S))((T))\supseteq \Fp((S)) \supseteq \Fp.
\]
While we will utilize this higher local field later on, our focus will be on the positive characteristic case. 

For the rest of this paper let $K=k((S))((T))$ be the two dimensional local field of positive characteristic. 
Place an order on $\mathbb{Z}^2$ as follows: for $\vec{i}=(i_1,i_2), \vec{j}=(j_1,j_2)\in\mathbb{Z}^2$, 
\begin{align}\label{ordering}
		\vec{i}\leq\vec{j}\iff 
		\begin{cases}
  		    i_1\leq j_1, i_2=j_2\textrm{ or } \\
   		    i_2\leq j_2.
		\end{cases}
\end{align}
We can give the field $K$ a rank two valuation $v_K:K\to\mathbb{Z}^2$ with $\mathbb{Z}^2$ ordered as above. With respect to this valuation, $K$ has valuation ring and  maximal ideal:
	\begin{align*}
		O_K &= \{x\in K\ |\ v_K(x)\geq (0,0)\}\\
		m_K &= \{x\in K\ |\ v_K(x)> (0,0)\}, 
	\end{align*}
so that $O_K/m_K \cong k$. For a detailed construction of the valuation $v_K$,  see p.7 of \cite{Zhukov_HDLF}, and for a more thorough treatment of rings of integers of higher local fields, see Section~3 of \cite{Morrow}.

We will close this section with a couple structure theorems regarding $K$ and $W(K)$. Fix an $\alpha\in k$ with $\Tr_{k/\Fp}(\alpha)\neq 0$ so that $\{\alpha\}$ is an $\Fp$-basis for $k/\wp k$. Let
 $\mathcal{C}$ be an $\Fp$-basis of $k$ and let $\beta=[\alpha]\in W(k)$. Define the indexing set:
\begin{align*}
	\mathbb{I} &= \{\vec{i}\in\mathbb{Z}_{<0}^2\ |\ p\nmid\gcd\vec{i} \}.
\end{align*}

\begin{proposition}\label{basis}
The set 
	\[
		\mathcal{D}=\{\alpha\}\cup \{ cS^{i}T^{j} | c\in\mathcal{C}, (i,j)\in \mathbb{I}\}
	\]
is an $\Fp$-basis for $K/\wp K$.
\end{proposition}
\begin{proof}
See Lemma 2, Section 1 of \cite{Parshin}.
\end{proof}

\begin{corollary}\label{structurethm}
 Every $x\in W(K)$ has a unique representative of the form
 	\[
 		c\beta + \sum_{\substack{(i,j)\in\mathbb{I}}} c_{ij}[S^i][T^j] \in W(K)/\wp W(K),
 	\]
 	where $c\in W(\Fp)$ and $c_{ij}\in W(k)$ with $c_{ij}\to 0$ as $i\to\infty$ or $i\to -\infty$ for every fixed $j$ and $\lim_{j\to -\infty}\max_i \| c_{ij}\|=0$, where $\|\cdot \|$ is the norm on $W(k)$. 
\end{corollary}
\begin{proof}
The unique representation follows from Proposition 3.10 in \cite{KostersWan} and Proposition~\ref{basis}.
\end{proof}

\subsection{Milnor $K$-groups}

Denote the second Milnor $K$-group of $K$ by $K_2(K)$. We write $K_2(K)$ as a multiplicative abelian group on symbols $\{a,b\}$, with $a,b\in K^\times$.  Define the topological Milnor $K$-group, $K_2^{top}(K)$, to be the quotient of $K_2(K)$ by the intersection of all its neighborhoods of zero. (See \cite{Fesenko}.) Our use of the topological Milnor $K$-group is due to our reliance on Parshin's class field theory. 

Take $\vec{i}\in\mathbb{Z}^2$ with $\vec{i}>(0,0)$. Following Hyodo (\cite{Hyodo}, p.291), define the $\vec{i}$th unit group of $K_2^{top}(K)$ to be:
	\[
		U^{\vec{i}}K_2^{top}(K)=\{\{u,x\} | u,x\in K^\times, v_K(u-1)\geq \vec{i}\}.
	\]
Similarly, let $V_K=1+m_K\subset K$ and denote by $VK_2^{top}(K)$ the subgroup of $K_2^{top}(K)$ generated by elements of the form $\{u,x\}$,  $u\in V_K$ and $x\in K$. That is,
\[
	VK_2^{top}(K)=\{\{u,x\} | u, x\in K^\times, v_K(u-1)>(0,0)\},
\]
and we see that for any $\vec{i}>(0,0)$, $U^{\vec{i}}K_2^{top}(K)\subseteq VK_2^{top}(K)$.
Define two indexing sets:
\begin{align*}
	\mathbb{J}_S &= \{(i,j)\in\mathbb{Z}_{>0}^2\ |\ \gcd(i,p)\neq 1, \gcd(j,p)=1\}\\ 
	\mathbb{J}_T &= \{(i,j)\in\mathbb{Z}_{>0}^2\ |\ \gcd(i,p)=1 \},
\end{align*}
so that $VK_2^{top}(K)$ has the following decomposition:
\begin{proposition} \label{VK_decomp}

Every $y\in VK_2^{top}(K)$ can be written uniquely in the form:
	\[
		\prod_{\substack{(i,j)\in\mathbb{J}_S\\k\geq 0}} \{1+b_{ijk}S^iT^j, S\}^{d_{ijk}p^k}\prod_{\substack{(i,j)\in\mathbb{J}_T\\k\geq 0}} \{1+a_{ijk}S^iT^j, T\}^{c_{ijk}p^k},
	\]
for some $a_{ijk}, b_{ijk}\in k$ and $c_{ijk}, d_{ijk}\in [0,p-1]$.
\end{proposition}
\begin{proof}
See Proposition 2.4 in \cite{Fesenko2}.
\end{proof}

\begin{proposition}\label{VKK_comp}
There is a group isomorphism:
	\[
		VK_2^{top}(K)\cong \varprojlim_n VK_2^{top}(K)/(VK_2^{top}(K))^{p^n}.
	\]
\end{proposition}
\begin{proof}
By Remark 1 in Section 4 in \cite{Fesenko} (p.496), the natural map
	\[
		VK_2^{top}(K)\to\varprojlim_nVK_2^{top}(K)/(VK_2^{top}(K))^{p^n}
	\]
is surjective. Moreover, by an earlier remark following 4.2 in \cite{Fesenko} (p.493), 
	\[
		\cap_n(VK_2^{top}(K))^{p^n}=\{1\},
	\]
so the natural map must also be injective. 
\end{proof}

\begin{corollary}\label{K2top_structure}
Let $\widehat{K_2^{top}(K)}=\varprojlim_n K_2^{top}(K)/(K_2^{top}(K))^{p^n}$. Every $y\in \widehat{K_2^{top}(K)}$ can be written uniquely in the form:
\begin{align*}
	y=\{S^e, T\}\prod_{\substack{(i,j)\in\mathbb{J}_S\\k\geq 0}} \{1+b_{ijk}S^iT^j, S\}^{d_{ijk}p^k}\prod_{\substack{(i,j)\in\mathbb{J}_T\\k\geq 0}} \{1+a_{ijk}S^iT^j, T\}^{c_{ijk}p^k},
\end{align*}
	for some $e\in\Zp$, $a_{ijk}, b_{ijk}\in k$ and $c_{ijk}, d_{ijk}\in [0,p-1]$.
\end{corollary}
\begin{proof}

It is known that $K_2^{top}(K)\cong \langle\{S,T\}\rangle\times (k^\times)^2\times VK_2^{top}(K)$ (See  \cite{Fesenko}, p.493). So let $\{S,T\}^i\in \langle\{S,T\}\rangle$. Then for any $i>0$, $\{S,T\}^i=\{S^i,T\}$, and we see the map $\langle\{S,T\}\rangle\to \mathbb{Z}$ given by $\{S^i,T\}\mapsto i$ is an isomorphism. This implies $\langle\{S,T\}\rangle/\langle\{S,T\}\rangle^{p^n}\cong \mathbb{Z}/p^n\mathbb{Z}$.

Because $k^p=k$, $ k^\times/(k^\times)^{p^n}=1$ and $\varprojlim_n k^\times/(k^\times)^{p^n}=1$. Then, by Proposition~\ref{VKK_comp}, we see that:
\begin{align*}
\widehat{K_2^{top}(K)}&\cong\varprojlim_n \langle\{S,T\}\rangle/\langle\{S,T\}\rangle^{p^n}\times VK_2^{top}(K)/(VK_2^{top}(K))^{p^n}\\
	&\cong\Zp\times\varprojlim_n VK_2^{top}(K)/(VK_2^{top}(K))^{p^n}\\
	&\cong\Zp\times VK_2^{top}(K).
\end{align*}
The Corollary then follows from Proposition~\ref{VK_decomp}.
\end{proof}

\section{The Explicit Local Symbol} \label{explicit_local_symbol}

Before proving Theorem~\ref{main_formula}, we assemble all the pieces required to define the Artin-Schreier-Witt-Parshin symbol.

\subsection{The Artin-Schreier-Witt-Parshin Symbol}

Let $O_F = W(k)$, $F=\Fr (O_F)$ and $\tilde{K}=F((S))((T))$. There is a natural lifting from $K$ to $\tilde{K}$ which induces liftings on both $W(K)$ and $\widehat{K_2^{top}(K)}$ in the following way.
We will write both liftings by $\hat{\ }$.

\begin{definition}
One can lift $W(K)$ to $W(\tilde{K})$ by the map:
\begin{align*}
	x = (x_i)_{i\geq 0} &\mapsto \hat{x}=(\hat{x}_i)_{i\geq 0},
\end{align*}
where $\hat{x}_i\in O_F((S))((T))$ is any lifting of $x_i$ under the canonical projection \\$O_F((S))((T))\to K$.
\end{definition}

\begin{definition} 
By Corollary~\ref{K2top_structure}, every $y\in \widehat{K_2^{top}(K)}$ can be written uniquely in the form:
\begin{align*}
y=\{S^e, T\}\prod\{1+a_{ij}S^iT^j, S\}\cdot\prod\{1+b_{ij}S^iT^j,T\}.
\end{align*}
We denote by $\hat{y}$ the lifting of $y$ to $\widehat{K_2^{top}(\tilde{K})}$ given by
\begin{align*}
	\hat{y}=\{S^e, T\}\prod\{1+[a_{ij}]S^iT^j, S\}\cdot\prod\{1+[b_{ij}]S^iT^j,T\}.
\end{align*}
\end{definition}

Parshin's map also makes use of the residue and logarithmic derivative maps:
\begin{definition}\label{residue}
Let $\omega=\sum_{i,j}a_{ij}S^iT^jdS\wedge dT\in \Omega^2_{\tilde{K}/F}$ where $a_{ij}\in F$. Define the map:
\begin{align*}
	\Res:\Omega^2_{\tilde{K}/F} &\to F\\
	\Res(\omega)&= a_{-1,-1}.
\end{align*}
\end{definition}

\begin{definition}\label{dlog}
For  $\{a_1,a_2\}\in K_2(\tilde{K})$ let
\begin{align*}
	d\log: K_2(\tilde{K})&\to \Omega^2_{\tilde{K}/F}\\ 
	d\log(\{a_1,a_2\})&= \frac{da_1}{a_1}\wedge \frac{da_2}{a_2},
\end{align*}
and note that this map is well-defined by the argument on p.166 of \cite{Parshin}.
\end{definition}

It is now possible to define the higher dimensional Schmid-Witt symbol  (See Section 3 of \cite{Parshin}):
\begin{definition}\label{parshins_formula} (Parshin's Formula)
The Artin-Schreier-Witt-Parshin symbol is given by:
\begin{align*}
[\cdot , \cdot)_n:&W_n(K)/\wp W_n(K)\times K_2^{top}(K)/(K_2^{top}(K))^{p^n}\to W_n(\Fp)=\mathbb{Z}/p^n\mathbb{Z}\\
&[x, y)_n= \Tr_{W_n(k)/W_n(\Fp)}(g^{-1}((\Res(g^{(i)}(\hat{x})\cdot d\log \hat{y}))_{i=0}^{n-1})\bmod p),
\end{align*}
where the $\bmod\ p$ is taken component-wise. Note that $\Res(g^{(i)}(\hat{x})\cdot d\log \hat{y})$ lies in the subring $W(k)\subset F$ so the $\bmod\ p$ makes sense.
\end{definition}

\subsection{Generalized Schmid's Formula}

In this section we will prove Theorem~\ref{main_formula}.
Let $R = F\{\{S\}\}\{\{T\}\}$ so that $R$ is the 3-dimensional local field of characteristic zero with residue field $K$. Note that $R$ can be equipped with an analogous mapping $\Res: R\to W(k)$ by sending $\sum_{i,j}a_{ij}S^iT^j\mapsto a_{-1,-1}$. 

\begin{definition}
If $x\in W(K)/\wp W(K)$, there is a reduction map to $\bar{x}\in R$ by sending
\begin{align*}
x=c\beta + \sum_{\substack{(i,j)\in\mathbb{I}}} c_{ij}[S^i][T^j]\bmod \wp W(K)
\end{align*}
to:
\begin{align*}
	\bar{x}=c\beta + \sum_{\substack{(i,j)\in\mathbb{I}}} c_{ij}S^iT^j,
\end{align*}
where $c\in W(\Fp)$ and $c_{ij}\in W(k)$. (The existence and uniqueness of the representation follows from Corollary~\ref{structurethm}.)
\end{definition}

\begin{remark}
For notational convenience, define the rings
\begin{align*}
 O_{\tilde{K}}&=O_F((S))((T))\subset \tilde{K}\\
 O_R&=O_F\{\{S\}\}\{\{T\}\}\subset R. 
 \end{align*}
 For any $f\in O_{\tilde{K}}$ and $g\in O_R$, it's easy to see that $fg\in O_R$. By construction, $\hat{y}$ is generated by symbols of the form $\{1+aS^iT^j, S\}$ and $\{1+bS^iT^j,T\}$ with $a, b\in O_F$. Therefore, since $\bar{x}\in O_R$ and $d\log(\hat{y})\in \Omega^2_{O_{\tilde{K}}/O_F}$, we see $\bar{x}\cdot d\log(\hat{y})\in \Omega^2_{O_R/O_F}$ and so  $\Res(\bar{x}\cdot d\log(\hat{y}))\in O_F$. Thus, taking a trace from $O_F=W(k)$ to $W(\Fp)$ is well-defined and our formula in Theorem~\ref{main_formula} is consequently well-defined.
\end{remark}

Before proving the main theorem, we start with a lemma:
\begin{lemma}\label{ijell_lemma}
Suppose that $i,j,\ell_1,\ell_2\in\mathbb{Z}_{\geq 0}$ with $ij\neq 0$ such that $p\nmid\gcd(i,j)$ and $p\nmid\gcd(\ell_1,\ell_2)$. Then for all $h\in\mathbb{Z}_{\geq 0}$,
	$$\frac{\ell_1p^h}{i}=\frac{\ell_2p^h}{j}\in\mathbb{Z}_{\geq 0}\iff \frac{\ell_1}{i}=\frac{\ell_2}{j}\in\mathbb{Z}_{\geq 0}.$$
\end{lemma}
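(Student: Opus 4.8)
The plan is to strip away the factor $p^h$ and reduce the statement to a single claim about $p$-adic valuations. First I would observe that cross-multiplying $\frac{\ell_1 p^h}{i}=\frac{\ell_2 p^h}{j}$ gives $\ell_1 j p^h = \ell_2 i p^h$, and since $p^h\neq 0$ this is equivalent to $\ell_1 j = \ell_2 i$, a condition that does not involve $h$ at all. Hence the equality $\frac{\ell_1 p^h}{i}=\frac{\ell_2 p^h}{j}$ holds if and only if $\frac{\ell_1}{i}=\frac{\ell_2}{j}$, and when it holds the two sides define a common nonnegative rational number $q:=\ell_1/i=\ell_2/j$ (nonnegative because $ij\neq 0$ forces $i,j>0$, while $\ell_1,\ell_2\geq 0$). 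The lemma therefore reduces to the assertion that $qp^h\in\mathbb{Z}_{\geq 0}$ if and only if $q\in\mathbb{Z}_{\geq 0}$.

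The direction $(\Leftarrow)$ is immediate: if $q$ is a nonnegative integer then so is $qp^h$. For $(\Rightarrow)$, since $q\geq 0$ is already known, it suffices to prove that $q$ is an integer, i.e.\ that $v_\ell(q)\geq 0$ for every prime $\ell$, where $v_\ell$ denotes the $\ell$-adic valuation. For every prime $\ell\neq p$ this is automatic from the hypothesis $qp^h\in\mathbb{Z}$, because $v_\ell(q)=v_\ell(qp^h)\geq 0$. The only prime that could cause trouble is $p$ itself, where multiplying by $p^h$ raises the valuation by $h$ and could, a priori, mask a negative value of $v_p(q)$.

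The key step, and the place where the hypothesis $p\nmid(i,j)$ enters, is to rule this out directly. Since $p\nmid(i,j)$, at least one of $i,j$ is prime to $p$; say $v_p(i)=0$ (the case $v_p(j)=0$ is symmetric, using $q=\ell_2/j$). Then $v_p(q)=v_p(\ell_1)-v_p(i)=v_p(\ell_1)\geq 0$ because $\ell_1$ is a nonnegative integer. Combining this with the $\ell\neq p$ case gives $v_\ell(q)\geq 0$ for all $\ell$, so $q\in\mathbb{Z}$, and with $q\geq 0$ we conclude $q\in\mathbb{Z}_{\geq 0}$. I expect the only real subtlety to be the bookkeeping that the $p^h$ factor affects only the $p$-adic valuation; the hypothesis $p\nmid(i,j)$ is precisely what prevents a denominator divisible by $p$ from appearing in $q$, so that multiplication by $p^h$ can never convert a non-integer into an integer. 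Note that the second coprimality hypothesis $p\nmid(\ell_1,\ell_2)$ is not actually needed for this argument.
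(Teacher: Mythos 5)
Your proof is correct, and it takes a genuinely cleaner route than the paper's. Both arguments ultimately run on $p$-adic valuations, but you organize things differently: after reducing, via cross-multiplication, to the single $h$-independent rational $q=\ell_1/i=\ell_2/j$, you check $v_\ell(q)\geq 0$ prime by prime --- trivially for $\ell\neq p$ since $v_\ell(q)=v_\ell(qp^h)\geq 0$, and at $p$ by using $p\nmid(i,j)$ to compute $v_p(q)$ from whichever representation has a $p$-free denominator, giving $v_p(q)=v_p(\ell_1)\geq 0$ or $v_p(q)=v_p(\ell_2)\geq 0$ outright. The paper instead argues by contradiction: it assumes $i\mid\ell_1p^h$ but $i\nmid\ell_1$ (forcing $v_p(i)>0$ and hence $p\nmid j$), equates the $p$-adic valuations of the two sides to get $v_p(\ell_1)-v_p(i)=v_p(\ell_2)$, and then splits into cases according to which of $v_p(\ell_1),v_p(\ell_2)$ vanishes --- a case split that exists only because the paper invokes the hypothesis $p\nmid(\ell_1,\ell_2)$. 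Your closing observation is right and is the substantive difference: that hypothesis is dispensable, since $p\nmid(i,j)$ alone forces $v_p(q)\geq 0$, so multiplication by $p^h$ can never repair a non-integer. Your version thus proves a slightly stronger statement with less bookkeeping; the paper's hypotheses are natural in context (the pairs $(\ell_1,\ell_2)$ arising in the proof of Theorem~\ref{main_formula} come from the basis in Corollary~\ref{structurethm}, where $p\nmid(\ell_1,\ell_2)$ holds automatically), but the lemma itself does not need them, and your proof makes that visible.
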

\begin{proof}
The reverse direction is trivial. So suppose $\frac{\ell_1p^h}{i}=\frac{\ell_2p^h}{j}\in\mathbb{Z}_{\geq 0}$ and $i\mid \ell_1p^h$ but $i\nmid \ell_1$. 
Taking the $p$-adic valuation of $\frac{\ell_1p^h}{i}=\frac{\ell_2p^h}{j}$ gives: 
\begin{align}\label{line:vp}
	v_p(\ell_1)+h-v_p(i)=v_p(\ell_2)+h-v_p(j).
\end{align}
Noting that $p\nmid j$, (\ref{line:vp}) becomes:
\begin{align}\label{line:vp_l1}
	v_p(\ell_1)-v_p(i)=v_p(\ell_2).
\end{align}
The condition $p\nmid \gcd(\ell_1,\ell_2)$ implies that $v_p(\ell_1)=0$ or $v_p(\ell_2)=0$. In the first case, $v_p(i)=-v_p(\ell_2)$, which cannot happen 
since $i\in\mathbb{Z}_{\geq 0}$. So $v_p(\ell_2)=0$. But then (\ref{line:vp_l1}) yields $v_p(\ell_1)=v_p(i)$, and because we assumed $i\mid \ell_1p^h$, we must have $i\mid \ell_1$, a contradiction. Finally, if $v_p(\ell_1)=0=v_p(\ell_2)$, then $v_p(i)=0$, also a contradiction.
\end{proof}

\mainformula*
\begin{proof}
Because we know the  structure of $W(K)/\wp W(K)$ and $\widehat{K_2^{top}({K})}$ via Proposition~\ref{basis} and  Corollary~\ref{K2top_structure} respectively,  and by the continuity and $\Zp$-bilinearity of $[\ ,\ )$ from Parshin (\cite{Parshin}, Proposition 6 and 7), it suffices to prove the claim for the following cases:
\begin{enumerate}
\item $[c[S^{\ell_1}T^{\ell_2}], \{S,T\})$, $c\in W(k)$, $\ell_1,\ell_2\in\mathbb{Z}$, $p\nmid\gcd(\ell_1,\ell_2)$.
\item $[b[S^{\ell_1}T^{\ell_2}], \{1+aS^iT^j, S\})$,  $[b[S^{\ell_1}T^{\ell_2}], \{1+aS^iT^j, T\})$, $a\in k$, $b\in W(k)$, $\ell_1,\ell_2, i, j\in\mathbb{Z}$, $p\nmid\gcd(\ell_1,\ell_2)$ and $p\nmid\gcd(i,j)$.
\end{enumerate}

\emph{Proof of (1):} 
Here $d\log(\{S,T\})=\frac{dS}{S}\wedge \frac{dT}{T}=S^{-1}T^{-1}dS\wedge dT$ and $\bar{x}=cS^{\ell_1}T^{\ell_2}$. Then,
\begin{align*}
\bar{x}\cdot d\log(\hat{y})&=cS^{\ell_1}T^{\ell_2}\cdot S^{-1}T^{-1}dS\wedge dT=cS^{\ell_1-1}T^{\ell_2-1} dS\wedge dT.
\end{align*}
Therefore if $\ell_1\neq 0$ or $\ell_2\neq 0$, $[c[S^{\ell_1}T^{\ell_2}], \{S, T\})=0$. If $\ell_1=\ell_2=0$, 
	\[
		\Tr_{W(k)/W(\Fp)}(\Res(cS^{\ell_1-1}T^{\ell_2-1} dS\wedge dT))=
		\Tr_{W(k)/W(\Fp)}(c).
	\]
On the other hand if $c=(c_k)_{k=0}^\infty$, then by Proposition 1.10 in \cite{Thomas_thesis},
	\[
		c[S^{\ell_1}T^{\ell_2}]=(c_kS^{\ell_1p^k}T^{\ell_2p^k})_{k=0}^\infty,
	\]
and so 
$\widehat{c[S^{\ell_1}T^{\ell_2}]}=
	([c_k]S^{\ell_1p^k}T^{\ell_2p^k})_{k=0}^\infty$.
Hence:
\begin{align*}
g^{(h)}({([c_k]S^{\ell_1p^k}T^{\ell_2p^k})_{k=0}^\infty})&=
	\sum_{k=0}^h p^k([c_k]S^{\ell_1p^k}T^{\ell_2p^k})^{p^{h-k}}\\
	&=(\sum_{k=0}^h p^k[c_k]^{p^{h-k}})S^{\ell_1p^h}T^{\ell_2p^h}.
\end{align*}
But then:
\begin{align}\label{Line2}
g^{(h)}(\hat{x})&\cdot S^{-1}T^{-1}dS\wedge dT=(\sum_{k=0}^h p^k[c_k]^{p^{h-k}})S^{\ell_1p^h-1}T^{\ell_2p^h-1}dS\wedge dT,
\end{align}
and consequently (\ref{Line2}) has nonzero residue if and only if $\ell_1=\ell_2=0$. If $\ell_1=\ell_2=0$, we get 
	\[
		g^{-1}((\sum_{k=0}^h p^k[c_k]^{p^{h-k}})_{h=0}^\infty)=g^{-1}((g^{h}(c'))_{h=0}^\infty)=c',
	\]
where $c'=([c_k])_{k=0}^\infty\in W(W(k))$.
Because $c'\equiv c\bmod p$, Parshin's formula finally gives:
\begin{align*}
 	\Tr_{W(k)/W(\Fp)}(g^{-1}((\Res(g^{(i)}\hat{x}&\cdot S^{-1}T^{-1}dS\wedge dT))_{i=0}^{m-1})\bmod p)\\&=\Tr_{W(k)/W(\Fp)}(c).
\end{align*}

\emph{Proof of (2):} For simplicity, we will assume $ij\neq 0$. The other cases are easier and can be proven similarly.
We first compute 
\begin{align*}
d\log(\{1+aS^iT^j, S\})&=\frac{d(1+aS^iT^j)}{1+aS^iT^j}\wedge\frac{dS}{S}
	\\&=-aj\sum_{k=0}^\infty(-a)^kS^{(k+1)i-1}T^{(k+1)j-1}dS\wedge dT.
\end{align*}
Likewise, $d\log(\{1+aS^iT^j, T\})=ai(1+aS^iT^j)^{-1}S^{i-1}T^{j-1}dS\wedge dT$. Then as before, $\bar{x}=cS^{\ell_1}T^{\ell_2}$, so 
\begin{align*}
\bar{x}\cdot d\log(\{1+aS^iT^j, S\})&
	=-ajc\sum_{k=0}^\infty(-a)^kS^{(k+1)i+\ell_1-1}T^{(k+1)j+\ell_2-1}.
\end{align*}
This has nonzero residue if and only if $k+1=-\frac{\ell_1}{i}=-\frac{\ell_2}{j}\in\mathbb{Z}_{\geq 1}$, that is, $k=\frac{-i-\ell_1}{i}=\frac{-j-\ell_2}{j}\in\mathbb{Z}_{\geq 0}$. (If $\ell_1=\ell_2=0$, then $(k+1)i-1=(k+1)j-1=-1$, so that $i=j=0$. The following implies that the resulting residue is zero regardless.) In this case, we get 
\begin{align}\label{Line3}
\Tr_{W(k)/W(\Fp)}(\Res(\bar{x}\cdot d\log(\{1+aS^iT^j, S\})))&=\Tr_{W(k)/W(\Fp)}(-jc(-a)^{\frac{\ell_1}{i}}).
\end{align}

Now, we compute the symbol using Parshin's formula. As before, \\ $g^{(h)}(\hat{x})=g^{(h)}(c')S^{\ell_1p^h}T^{\ell_2p^h}$. Then 
\begin{align*}
g^{(h)}(\hat{x})\cdot &d\log(\{1+aS^iT^j, S\})\\&=-ajg^{(h)}(c')\sum_{k=0}^\infty(-a)^kS^{(k+1)i+\ell_1p^h-1}T^{(k+1)j+\ell_2p^h-1}dS\wedge dT.
\end{align*}
This term has nonzero residue if and only if $k=-\frac{\ell_1p^h}{i}-1=-\frac{\ell_2p^h}{j}-1\in\mathbb{Z}_{\geq 0}$. (By Lemma~\ref{ijell_lemma}, this occurs exactly when $-\frac{\ell_1}{i}-1=-\frac{\ell_2}{j}-1\in\mathbb{Z}_{\geq 0}$.)  For this $k$, we get a residue $-ajg^{(h)}(c')(-a)^{-\frac{\ell_1p^h}{i}-1}=-jg^{(h)}(c')(-a)^{-\frac{\ell_1p^h}{i}}$.

Then, because $((-a)^{-\frac{\ell_1p^h}{i}})_{h=0}^\infty=(((-a)^{-\frac{\ell_1}{i}})^{p^h})_{h=0}^\infty=g([(-a)^{-\frac{\ell_1}{i}}])$, we see that:
\begin{align*}
	g^{-1}((-jg^{(h)}(c')(-a)^{-\frac{\ell_1p^h}{i}})_{h=0}^\infty)&=-jg^{-1}(g(c') g([(-a)^{-\frac{\ell_1}{i}}]))\\
		&=-jc'[(-a)^{-\frac{\ell_1}{i}}]\equiv -jc(-a)^{-\frac{\ell_1}{i}}\bmod p.
\end{align*}
Comparing this with (\ref{Line3}), the claim follows.
\end{proof}

\begin{remark}
Computations like those in the proof of Theorem~\ref{main_formula} can show that for $\ell, i\in\mathbb{Z}_{>0}$,
\begin{align*}
	[cS^\ell, \{1+aS^i, T\})=[cS^\ell, 1+aS^i) \\
	[cT^\ell, \{1+aT^i, S\})=[cT^\ell, 1+aT^i),
\end{align*}
where the symbol $[,)$ on the right hand side is the one-dimensional Schmid-Witt symbol from \cite{KostersWan} and \cite{Thomas} over a local field $K=k((S))$ and $K=k((T))$, respectively.
\end{remark}

Some further explicit computations using Theorem~\ref{main_formula} yield: 
\begin{corollary}\label{explicit_symbol}
 If $x=c\beta+\sum_{(i,j)\in\mathbb{I}}c_{ij}[S]^{-i}[T]^{-j}$ and 
 \[
 y=\{S^e, T\}\prod_{(i,j)\in\mathbb{J}_S}\{1+a_{ij}S^iT^j, S\}\cdot\prod_{(i,j)\in\mathbb{J}_T}\{1+b_{ij}S^iT^j,T\},
 \]
 then:
\begin{align*}
[x,y)=&ec\Tr_{W(k)/W(\Fp)}(\beta)\\
	&+\sum_{(m,n)\in\mathbb{I}} [ \sum_{\substack{(i,j)\in\mathbb{J}_S,\\\frac{m}{i}=\frac{n}{j}\in\mathbb{Z}_{\geq 0}}}(\sum_{k=0}^\infty c_kp^k\Tr_{W(k)/W(\Fp)}(-jc_{m,n}([-a_{ijk}]^{m/i})))\\
&+\sum_{\substack{(i,j)\in\mathbb{J}_T,\\\frac{m}{i}=\frac{n}{j}\in\mathbb{Z}_{\geq 0}}}(\sum_{k=0}^\infty d_kp^k\Tr_{W(k)/W(\Fp)}(ic_{m,n}([-b_{ijk}]^{m/i})))].
\end{align*}
\end{corollary}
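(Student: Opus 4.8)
The plan is to read the corollary off Theorem~\ref{main_formula} by expanding both arguments into the generators on which the case analysis in its proof was performed, and then recording which pairings survive. First I would write $[x,y)=\Tr_{W(k)/W(\Fp)}(\Res(\tilde{x}\cdot d\log\tilde{y}))$ and use the $\Zp$-bilinearity (and continuity) of $[\,,\,)$ to distribute the symbol over the representation $x=c\beta+\sum_{(m,n)\in\Omega}c_{mn}[S]^{-m}[T]^{-n}$ furnished by Corollary~\ref{structurethm} and the decomposition of $y$ furnished by Proposition~\ref{K2comp_struct} together with Proposition~\ref{VK_decomp}. Under the latter each factor $\{1+a_{ij}S^iT^j,S\}$ is really a convergent product $\prod_k\{1+a_{ijk}S^iT^j,S\}^{c_{ijk}p^k}$ (and likewise the $T$-factors carry exponents $d_{ijk}p^k$), so $\Zp$-linearity in the second argument pulls these exponents out as scalar weights; this is the source of the factors $\sum_k c_{ijk}p^k$ and $\sum_k d_{ijk}p^k$ in the stated expression.

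Next I would dispose of the framing generators $c\beta$ and $\{S^e,T\}$ using case~(1) of the proof of Theorem~\ref{main_formula}. Pairing a monomial $c_{mn}[S]^{-m}[T]^{-n}$ against $\{S,T\}$ forces the residue to vanish unless $m=n=0$, which is excluded because every index of $\Omega$ carries a strictly negative exponent; and pairing the constant $c\beta$ against a unit generator $\{1+aS^iT^j,\cdot\}$ produces $d\log$-terms supported in bidegrees $((k+1)i-1,(k+1)j-1)$, which cannot equal $(-1,-1)$ when $ij\neq0$. Thus the only cross-pairings that survive are those of the monomials $c_{mn}[S]^{-m}[T]^{-n}$ with the unit generators, which is precisely the setting of case~(2), while the purely diagonal contribution $[c\beta,\{S^e,T\})$ is the elementary term recorded separately by case~(1).

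For these surviving pairings I would invoke the case~(2) computation with $\tilde{x}=c_{mn}S^{-m}T^{-n}$. Expanding $d\log\{1+aS^iT^j,S\}=-aj\sum_k(-a)^kS^{(k+1)i-1}T^{(k+1)j-1}dS\wedge dT$ and extracting the coefficient of $S^{-1}T^{-1}$ shows the residue is nonzero exactly when $(k+1)i=m$ and $(k+1)j=n$ for some $k\geq0$, i.e. when $m/i=n/j\in\mathbb{Z}_{\geq0}$; Lemma~\ref{ijell_lemma} guarantees that this resonance condition is unaffected by the Frobenius twists coming from the ghost components, so it may be verified once and for all. The surviving residue is $-j\,c_{mn}[-a_{ijk}]^{m/i}$, and the analogous computation for the $T$-generators (whose $d\log$ carries the coefficient $ai$ rather than $-aj$) yields $i\,c_{mn}[-b_{ijk}]^{m/i}$. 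Taking traces, reinstating the pulled-out weights $c_{ijk}p^k$ and $d_{ijk}p^k$, and summing over all $(m,n)\in\Omega$ and all matching $(i,j)\in\Omega_1,\Omega_2$ assembles the claimed formula.

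The main obstacle is analytic rather than algebraic: all three sums and products are infinite, so I must justify interchanging the symbol with these limits. This rests on the continuity of $[\,,\,)$ together with the decay $c_{mn}\to0$ from Corollary~\ref{structurethm} and the convergence of the product of Proposition~\ref{VK_decomp} in the topology of $\widehat{K_2^{top}(\tilde{K})}$; granting these, only finitely many pairings contribute to any fixed truncation $W_m(\Fp)$, so the bilinear expansion is legitimate. A secondary bookkeeping point is to keep the signs and the $dS\wedge dT$ orientation consistent across the two families of unit generators, which is exactly what produces the asymmetry between the $-j$ and $+i$ appearing in the two inner sums.
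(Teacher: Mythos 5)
Your proposal is correct and takes essentially the same route as the paper, which offers no written proof beyond ``some further explicit computations using Theorem~\ref{main_formula}'': your bilinear expansion of $x$ and $y$ via Corollary~\ref{structurethm} and Propositions~\ref{K2comp_struct} and~\ref{VK_decomp}, with case~(1) of the theorem's proof killing the cross-pairings and case~(2) supplying the resonance condition $\frac{m}{i}=\frac{n}{j}\in\mathbb{Z}_{\geq 0}$ and the residues $-jc_{mn}[-a_{ijk}]^{m/i}$ and $ic_{mn}[-b_{ijk}]^{m/i}$, is exactly the computation the paper gestures at. If anything you are more careful than the printed statement, which silently suppresses both the convergence issues you address and the diagonal term $[c\beta,\{S^e,T\})$ that your case~(1) analysis correctly records.
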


\section{Computation of $G^{\vec{i}}$}\label{sec:compG}

Let $G_K=\Gal(K^{ab}/K)$ be the Galois group of the maximal abelian extension of $K$, $G_{p^n}=G_K/(G_K)^{p^n}$ be the Galois group of the maximal abelian extension of exponent $p^n$ of $K$, and $G_{p^\infty}=\varprojlim_n G_{p^n}$ be the Galois group of the maximal abelian pro-$p$ extension of $K$. For any $n$-dimensional local field, Hyodo \cite{Hyodo} and Lomadze \cite{Lomadze} studied analogues to the classical upper ramification groups. Here, we will apply Theorem~\ref{main_formula} to compute the ramification groups of $G_{p^\infty}$ in the two-dimensional case.
In order to define these groups, however, we need to introduce the reciprocity map.

In \cite{Parshin}, Parshin defines the reciprocity map by pasting together three compatible maps corresponding to the tame, unramified, and pro-$p$ extensions of $K$. Here, we sketch the formulation of the pro-$p$ part of the reciprocity map, which is the only part we will use. 
Let 
\[
	\mathfrak{M}(K)=\varinjlim W_n(K)/\wp W_n(K),
\]
 with the direct limit taken with respect to the shifting maps $V:W_n(K)\to W_{n+1}(K)$. Define the natural maps:
\begin{align*}
	\phi_n:\widehat{K_2^{top}(K)} &\to K_2^{top}(K)/(K_2^{top}(K))^{p^n}\\
	\psi_n: W_n(K)/\wp W_n(K) &\to \mathfrak{M}(K).
\end{align*}
Starting with the Artin-Schreier-Witt-Parshin symbol
\[
	[\cdot , \cdot)_n:W_n(K)/\wp W_n(K)\times K_2^{top}(K)/(K_2^{top}(K))^{p^n}\to W_n(\Fp)=\mathbb{Z}/p^n\mathbb{Z},
\]
Parshin applies a standard argument (\cite{KawadaSatake}, p.373) to yield the pairing
\begin{align*}
	P: \mathfrak{M}(K) \times \widehat{K_2^{top}(K)} &\to \mathbb{Q}/\mathbb{Z},\\
	P(x,y) &= [x_n, \phi_n(y)]_n,
\end{align*}
where $n$ is any integer such that $x_n\in W_n(K)/\wp W_n(K)$ with $x=\psi_n(x_n)$. From this symbol, we then get a map:
\begin{align}\label{omegak_init}
	\widehat{K_2^{top}(K)} &\to \Hom(\mathfrak{M}(K), \mathbb{Q}/\mathbb{Z})\\
	y &\mapsto (x\mapsto P(x, y)).\nonumber
\end{align}
However, because  $\mathfrak{M}(K)$ is the Pontryagin dual of $G_{p^\infty}$ via Artin-Schreier-Witt theory, the map on line (\ref{omegak_init}) yields a map $\omega_K$ from $ \widehat{K_2^{top}(K)} $ to $G_{p^\infty}$. This defines the pro-$p$ part of Parshin's reciprocity map. 
It can be further shown that this map is actually an isomorphism: 
\begin{proposition}\label{prop:Ginfiso}
For $n\geq 1$, $K_2^{top}(K)/(K_2^{top}(K))^{p^n}\cong G_{p^n}$ and therefore 
$$\widehat{K_2^{top}(K)}=\varprojlim_{n}K_2^{top}(K)/(K_2^{top}(K))^{p^n}\cong G_{p^\infty}.$$
\end{proposition}
\begin{proof}
By Artin-Schreier-Witt theory, $W_n(K)/\wp W_n(K)$ and $G_{p^n}$ are Pontryagin dual to each other. However, by Proposition 1 in \cite{Parshin2}, 
	\[
		K_2^{top}(K)/(K_2^{top}(K))^{p^n}\cong \Hom(W_n(K)/\wp W_n(K), \mathbb{Q}/\mathbb{Z}),
	\]
and so Pontryagin duality yields the isomorphism. The second isomorphism follows by taking the projective limit. 
\end{proof}

Now, with the isomorphism $\omega_K:\widehat{K_2^{top}(K)}\to G_{p^\infty}$, the ramification groups of $G_{p^\infty}$ can be defined:
\begin{definition}\label{def:ramgrp}
For $\vec{i}\in\mathbb{Z}^2$ with $\vec{i}>(0,0)$, define: 
	\[
		G_{p^\infty}^{\vec{i}}=\omega_K(U^{\vec{i}}K_2^{top}(K)).
	\]
\end{definition}

Like \cite{Thomas} and \cite{KostersWan}, we do not compute $G_{p^\infty}^{\vec{i}}$ directly, but instead compute its image in a decomposition of $H=\Hom(W(K)/\wp W(K), \Zp)$.
First, observe that by Artin-Schreier-Witt theory and a well known property of direct and projective limits:
\begin{align}\label{line:Ktopiso}
	 G_{p^\infty}&\cong \Hom(\mathfrak{M}(K), \mathbb{Q}/\mathbb{Z})= \Hom(\varinjlim_n W_n(K)/\wp W_n(K), \mathbb{Q}/\mathbb{Z}) \\ 
		&\cong \varprojlim_n \Hom(W_n(K)/\wp W_n(K), \mathbb{Z}/p^n\mathbb{Z})\cong H,\nonumber 
\end{align}
and so following the isomorphisms above, it is easy to see that $\omega_K$ has the equivalent form:
\begin{align}\label{line:parshinrec}
	\omega_K^H: \widehat{K_2^{top}(K)} &\to \Hom(W(K)/\wp W(K), \Zp) \\
	y &\mapsto (x\mapsto [x,y)).\nonumber
\end{align}
We can then consider a decomposition of $\Hom(W(K)/\wp W(K), \Zp)$ in which to compute $G_{p^\infty}^{\vec{i}}$:
\begin{lemma}\label{lemma:Hiso}
There is a group isomorphism 
\[
	\Hom(W(K)/\wp W(K), \Zp)\cong W(\Fp)\times \prod_{\substack{(i,j)\in\mathbb{J}_S}}W(k)\prod_{\substack{(i,j)\in\mathbb{J}_T}}W(k).
\]
\end{lemma}
\begin{proof}
For $\chi\in H$, observe that for any $x=c\beta+\sum_{(i,j)\in\mathbb{I}}c_{ij}[S]^{-i}[T]^{-j}\in W(K)/\wp W(K)$,
\[
	\chi(c\beta+\sum_{(i,j)\in\mathbb{I}}c_{ij}[S]^{-i}[T]^{-j})=\chi(c\beta)+\sum_{(i,j)\in\mathbb{I}}\chi(c_{ij}[S]^{-i}[T]^{-j}),
\]
so an element of $H$ is determined entirely by its values on the elements $c\beta$ and $c_{ij}[S]^{-i}[T]^{-j}$, and $$H\cong \Hom(W(\Fp), W(\Fp))\times\prod_{\substack{(i,j)\in\mathbb{I}}}\Hom(W(k), W(\Fp)).$$
But since $W(k)\cong\Hom(W(k), W(\Fp))$ by the well known trace map isomorphism (see Remark 4.10 in \cite{KostersWan}), 
	\[
		H\cong  W(\Fp)\times\prod_{\substack{(i,j)\in\mathbb{I}}}W(k).
	\]
As the sets $\mathbb{I}$ and $\mathbb{J}_S\cup\mathbb{J}_T$  are bijective,
\[
	\prod_{\substack{(i,j)\in\mathbb{I}}}W(k)= \prod_{\substack{(i,j)\in\mathbb{J}_S}}W(k)\prod_{\substack{(i,j)\in\mathbb{J}_T}}W(k), 
\]
and the lemma follows.

\end{proof}

Hence Parshin's reciprocity map in (\ref{line:parshinrec}) induces an isomorphism
\begin{align}\label{VK_iso}
\phi: \widehat{K_2^{top}(K)}\xrightarrow{\sim} W(\Fp)\times \prod_{\substack{(i,j)\in\mathbb{J}_S}}W(k)\prod_{\substack{(i,j)\in\mathbb{J}_T}}W(k).
\end{align}

Using Corollary~\ref{explicit_symbol}, 
we will explicitly compute $\phi$.
For $X=S$ or $T$, define two subsets of $VK_2^{top}(K)$:
\begin{align*}
	VK_{2,X}^{top}(K) &= \left\{\prod_{\substack{(i,j)\in\mathbb{J}_X\\k\in\mathbb{Z}_{\geq 0}}}  \{1+a_{ijk}S^iT^j, X\}^{c_{ijk}p^k}\ |\ a_{ijk}\in k, c_{ijk}\in[0,p-1]\right\},
	\end{align*}
so that by Proposition~\ref{VK_decomp}, $VK_{2}^{top}(K)\cong VK_{2,S}^{top}(K)\times VK_{2,T}^{top}(K)$.
Define the map
$\phi_S:VK_{2,S}^{top}(K)\to \prod_{(i,j)\in\mathbb{J}_S}W(k)$
by sending $\prod_{\substack{(i,j)\in\mathbb{J}_S,\\ k\in\mathbb{Z}_{\geq 0}}} \{1+a_{ijk}S^iT^j, S\}^{c_{ijk}p^k}$ to 	
\[
	\left (\sum_{k=0}^\infty c_{ijk}p^k\sum_{\substack{(i,j)\in\mathbb{J}_S\\\frac{m}{i}=\frac{n}{j}\in\mathbb{Z}_{\geq 0}}}(-j)[-a_{ijk}]^{m/i}\right )_{(m,n)\in\mathbb{J}_S},
\]
and define $\phi_T: VK_{2,T}^{top}(K)\to \prod_{(i,j)\in\mathbb{J}_T}W(k)$ by mapping 
$\prod_{\substack{(i,j)\in\mathbb{J}_T,\\ k\in\mathbb{Z}_{\geq 0}}} \{1+b_{ijk}S^iT^j, T\}^{d_{ijk}p^k}$ to
\[
 \left (\sum_{k=0}^\infty d_{ijk}p^k\sum_{\substack{(i,j)\in\mathbb{J}_T\\\frac{m}{i}=\frac{n}{j}\in\mathbb{Z}_{\geq 0}}}i[-b_{ijk}]^{m/i}\right )_{\substack{(m,n)\in\mathbb{J}_T}}.
 \]

\begin{proposition}\label{prop:phi}
Parshin's reciprocity map induces an isomorphism:
\begin{align*}
	\phi: \widehat{K_2^{top}(K)} &\to W(\Fp)\times \prod_{\substack{(i,j)\in\mathbb{J}_S}}W(k)\prod_{\substack{(i,j)\in\mathbb{J}_T}}W(k).
\end{align*}
Writing each $y\in \widehat{K_2^{top}(K)}$ as in Corollary~\ref{K2top_structure}, the map $\phi$ is given by sending
\begin{align*}
	y=\{S^e, T\}\prod\{1+a_{ij}S^iT^j, S\}\cdot\prod\{1+b_{ij}S^iT^j,T\} 
\end{align*}
to the tuple
\begin{align*}
  \left (\Tr_{W(k)/W(\Fp)}(e\beta), \phi_S(\prod\{1+a_{ij}S^iT^j, S\}), \phi_T(\{1+b_{ij}S^iT^j,T\} )\right ).
\end{align*}
\end{proposition}
\begin{proof}
The proposition follows by starting with the map (\ref{line:parshinrec}) and Corollary~\ref{explicit_symbol}, and following the isomorphisms in Lemma~\ref{lemma:Hiso}. As an example, we will show how this process works for the $\{S^e,T\}$ term. The other two components are similar.

Write $\chi(x)=(\omega_K^H(\{S^e, T\}))(x)= [x,\{S^e, T\})\in H$. If we write $x=c\beta+\sum_{(i,j)\in\mathbb{I}}c_{ij}[S]^{-i}[T]^{-j}$ as in Corollary~\ref{structurethm}, we see  by Corollary~\ref{explicit_symbol} that $\chi(x)=\chi(c\beta)$. We can therefore just consider the case when $x=c\beta$ and so:
\begin{align}\label{line:tracecb}
\chi(c\beta) = \Tr_{W(k)/W(\Fp)}(c\beta e) = c\Tr_{W(k)/W(\Fp)}(\beta e).
\end{align}
We can consider $\chi$ to be an element in $\Hom(W(\Fp), W(\Fp))$, mapping $c$ to $c\Tr_{W(k)/W(\Fp)}(\beta e)$. But then recall the isomorphism:
\begin{align*}
	\Hom(W(\Fp), W(\Fp)) &\to W(\Fp)\\
	(x\mapsto xa)\mapsto a,
\end{align*}
and so under this map, $\chi$ corresponds to $\Tr_{W(k)/W(\Fp)}(\beta e)$ by (\ref{line:tracecb}).  Overall, we see that 
\[
	\phi(\{S^e, T\}) = \left (\Tr_{W(k)/W(\Fp)}(\beta e), 0, 0 \right ).
\]
\end{proof}

\begin{example}
To see how the above maps compare with the map in \cite{KostersWan} (Remark 4.10), we shall compute a couple of examples. 
Observe that,
	\[
		\{1+a_{1,1,0}S^1T^1, S\}\xrightarrow{\phi_S}
		\left (\begin{cases} 0\textrm{ if } m\neq n\\
					[a_{1,1,0}]^n \textrm{ if } m=n
			\end{cases}
		\right )_{(m,n)\in\mathbb{J}_S}
	\]
	\[
		\{1+a_{1,p,0}S^1T^p, T\}\xrightarrow{\phi_T}
		\left (\begin{cases} 
				[-a_{1,p,0}]^m \textrm{ if } n=pm \\
				0\textrm{ otherwise}
			\end{cases}
		\right )_{(m,n)\in\mathbb{J}_T}.
	\]
To compare, the map in \cite{KostersWan} (and more generally Lemma 2.1 in \cite{Katz}) yields:
	\[
		(1-a_{10}T^1)^{p^0}\mapsto ([a_{10}]^i)_{\gcd(i,p)=1}.
	\]
\end{example}

\ramgroups*
\begin{proof}
The proof will follow from computing $\phi(U^{\vec{r}}K_2^{top}(K))$ via Proposition~\ref{prop:phi}.

Let $y\in VK_2^{top}(K)$, and write $y$ as in Proposition~\ref{VK_decomp}. Then $y\in U^{\vec{r}}K_2^{top}(K)$ if and only if for all $(p^ki, p^kj)<\vec{r}$, $a_{ijk}=0$ and for all $(p^ki, p^kj)<\vec{r}$, $b_{ijk}=0$. Hence 
\[
	\phi(U^{\vec{r}}K_2^{top}(K))\subset \left (p^{\ell(\vec{r}, \vec{m})}W(k)\right)_{\substack{\vec{m}\in\mathbb{Z}_{>0}^2\\ p\nmid \gcd\vec{m}}}.
\]

The other inclusion is clear from the explicit definition of the maps $\phi_S$ and $\phi_T$. 
\end{proof}



\begin{thebibliography}{xx}

\bibitem{Fesenko2} {\sc Ivan Fesenko},
\textit{Abelian local $p$-class field theory}.
Math. Ann. \textbf{301} (1995), 561--586.

\bibitem{Fesenko} {\sc Ivan Fesenko},
\textit{Sequential topologies and quotients of Milnor $K$-groups of higher local fields}.
St. Petersburg J. Math. \textbf{12} (2002), 485--501.

\bibitem{Hyodo} {\sc O. Hyodo},
\textit{Wild ramification in the imperfect residue field case}.
Adv. Stud. Pure Math. \textbf{12} (1987), 287--314.

\bibitem{Kato1} {\sc Kazuya Kato},
\textit{A generalization of local class field theory by Using $K$-groups I}.
J. Fac. Sci. Univ. Tokyo Sect. 1A Math. {\bf 26(2)} (1979), 303--376.

\bibitem{Kato2} {\sc Kazuya Kato},
\textit{A generalization of local class field theory by Using $K$-groups II}.
J. Fac. Sci. Univ. Tokyo Sect. 1A Math. {\bf 27(3)} (1980), 603--683.

\bibitem{Kato3} {\sc Kazuya Kato},
\textit{A generalization of local class field theory by Using $K$-groups III}.
J. Fac. Sci. Univ. Tokyo Sect. 1A Math. {\bf 29 (1)} (1982), 31--43.

\bibitem{Katz} {\sc Nicholas M. Katz},
\textit{Witt Vectors and a Question of Keating and Rudnick}.
Int. Math. Res. Notices \textbf{16} (2014), 3613--3638.

\bibitem{KawadaSatake} {\sc Yukiyosi Kawada \& Ichiro Satake},
\textit{Class formations II}.
J. Fac. Sci. Univ. Tokyo Sect. I \textbf{7} (1956), 353--389.

\bibitem{KostersWanPub} {\sc Michiel Kosters \& Daqing Wan},
\textit{Genus growth in $\Zp$ towers of function fields}.
Proc. Amer. Math. Soc. \textbf{146(4)} (2018), 1481--1494.

\bibitem{KostersWan} {\sc Michiel Kosters \& Daqing Wan},
\textit{On the Arithmetic of $\Zp$-Extensions}.
\url{https://arxiv.org/pdf/1607.00523.pdf} (2016).

\bibitem{Lomadze} {\sc V. G. Lomadze},
\textit{On the Ramification Theory of Two-dimensional Local Fields}.
Mathematics of the USSR-Sbornik \textbf{37(3)} (1980), 349--365.

\bibitem{Morrow} {\sc Matthew Morrow},
\textit{An introduction to higher dimensional local fields and adeles}.
\url{https://arxiv.org/abs/1204.0586} (2012).

\bibitem{Parshin2} {\sc A. N. Parshin},
\textit{Galois Cohomology and the Brauer Group of Local Fields}.
Trudy Mat. Inst. Setklov. \textbf{183(4)} (1991), 191--201.

\bibitem{Parshin} {\sc A. N. Parshin},
\textit{Local Class Field Theory}.
Trudy Mat. Inst. Setklov. \textbf{165} (1985), 157--185. 

\bibitem{Schmid} {\sc H.L. Schmid}, 
\textit{Über das Reziprozitätsgesetz in relativ-zyklischen algebraischen Funktionenkörpern mit endlichem Konstantenkörper}. 
Math. Zeit. \textbf{40(1)} (1936), 94--109.

\bibitem{Serre} {\sc Jean-Pierre Serre},
\textit{Local Fields}.
Graduate Texts in Math. \textbf{67}, Springer, New York (1979).

\bibitem{Thomas_thesis} {\sc Lara Thomas},
\textit{Arithm\'etique des extensions d'Artin-Schreier-Witt}.
PhD thesis, \em l'Universit\'e de Toulouse II le Mirail (2005).

\bibitem{Thomas} {\sc Lara Thomas},
\textit{Ramification groups in Artin-Schreier-Witt extensions}.
Journal de Theorie des Nombres de Bordeaux {\bf 17} (2005), 689--720.

\bibitem{Witt} {\sc Ernst Witt},
\textit{Zyklische K\"orper und Algebren der Charakteristik $p$ vom Grad
  $p^n$}.
Journal f\"ur die reine und angewandte Mathematik {\bf 176} (1937), 126--140.

\bibitem{Zhukov_HDLF} {\sc Igor Zhukov},
\textit{Higher dimensional local fields}.
Invitation to higher local fields, Geometry \& Topology Monographs {\bf 3}, Coventry
(2000).
 
\end{thebibliography}
\end{document}